\documentclass[a4paper,11pt]{article}
\usepackage[T1]{fontenc}
\usepackage[utf8]{inputenc}
\usepackage{authblk}
\usepackage[left=3.6cm, right=3.6cm, top=4.1cm, bottom=3.1cm]{geometry}
\usepackage{amsthm,amsmath,amssymb,bm,showlabels} 
\usepackage{stmaryrd}
\usepackage{yhmath}
\allowdisplaybreaks
\usepackage{mathrsfs}
\usepackage{mathdots,mathtools}

\usepackage{fancyhdr}
\usepackage{graphicx,subfigure,caption,epstopdf}
\graphicspath{{figure/}}
\usepackage{stmaryrd}
\usepackage{color}
\usepackage{enumerate}
\usepackage{enumitem}
\usepackage[framemethod=tikz]{mdframed} 
\usepackage{lipsum}

\usepackage{setspace,indentfirst}
\usepackage{float}
\usepackage{geometry} 
\usepackage[square, comma, sort&compress, numbers]{natbib}

\usepackage{changes}
\usepackage{hyperref}
\usepackage{chngcntr}
\usepackage{titlesec}
\usepackage{bm}
\usepackage[most]{tcolorbox}
\usepackage[hang,flushmargin]{footmisc}
\usepackage{caption}
\allowdisplaybreaks[4]

\definechangesauthor{red}
\colorlet{Changes@Color}{red}
\hypersetup{colorlinks=true,citecolor={black},linkcolor={black}}

\usepackage{letltxmacro} 
\LetLtxMacro{\amsmathcal}{\mathcal}
\usepackage{boondox-cal} 
\LetLtxMacro{\mathcal}{\amsmathcal} 
\DeclareMathAlphabet{\mathbcal}{U}{BOONDOX-cal}{m}{n}
\DeclareMathAlphabet{\mathcal}{OMS}{cmsy}{m}{n} 

\newtheorem{theorem}{Theorem}[section]

\newtheorem{definition*}{Definition}
\newtheorem{theorem*}{Theorem}

\newtheorem{lemma*}{Lemma}
\newtheorem{proposition}[theorem]{Proposition}

\newtheorem{corollary*}{Corollary}
\theoremstyle{definition}

\newtheorem{definition}[theorem]{Definition}

\numberwithin{equation}{section}
\numberwithin{figure}{section}

\makeatletter

\newcommand{\Rmnum}[1]{\expandafter\@slowromancap\romannumeral #1@}
\makeatother

\allowdisplaybreaks[4]

\hypersetup{
	colorlinks=true,
	linkcolor=blue,
	filecolor=blue,
	urlcolor=blue,
	citecolor=blue,
}

\definecolor{uibpur}{HTML}{8A2BE2}
\definecolor{uiborg}{HTML}{FF4500}

\tcbuselibrary{breakable}
\newtcolorbox{mybox}{enhanced, boxrule=0pt, frame hidden, sharp corners, colback=blue!5!white, colframe=blue!5!white,breakable,left=0mm,right=0mm,top=0mm,bottom=0mm}

\DeclareSymbolFont{ugmL}{OMX}{mdugm}{m}{n}
\DeclareMathAccent{\wideparen}{\mathord}{ugmL}{"F3}

\titleformat*{\section}{\large\bfseries}
\titleformat*{\subsection}{\normalsize\bfseries}
\makeatletter
\g@addto@macro\bfseries{\boldmath}
\makeatother

\title{{\textbf{An Extension and Refinement of \\the Brouwer-Schauder-Tychonoff \\Fixed Point Theorem}}}
\author[1]{Lixin Cheng}
\author[1]{Chulei Liu}
\author[1]{Wen Zhang\thanks{Corresponding author: Wen Zhang.}}
\affil[1]{School of Mathematical Sciences, Xiamen University, Xiamen, 361005, China}

\date{}

\begin{document}
\maketitle
\footnote{\hspace{1.8em}E-mail addresses: 		
	lxcheng@xmu.edu.cn (L. Cheng),
	chuleiliu@stu.xmu.edu.cn (C. Liu),\\
	\hspace*{9.4em}
	wenzhang@xmu.edu.cn (W. Zhang).}
\footnote{\hspace{1.8em}Supported by the National Natural Science Foundation of China, No. 12271453 and\\
	\hspace*{7.8em}
	Fujian Provincial Natural Science Foundation of China No. 2024J01026.}
\vspace{-20pt}
\begin{abstract}
In this paper, we present the Brouwer-Schauder-Tychonoff fixed point theorem on locally convex spaces as the following extension and improvement:
	Suppose that $S$ is a compact star-shaped subset with respect to $p\in S$ with its convexity index $\alpha_p>0$. Then every continuous self-mapping $f:S\rightarrow S$ has one of the following two properties:\vspace{-2pt}
	\begin{itemize}
		\setlength{\itemsep}{-12pt}
		\item[(a)] The point $p$ is a fixed point of $f$, i.e.,  $f(p)=p$;\\
		\item[(b)] $f$ has uncountably many different eigenvalues and eigenvectors; that is, there exists an injective mapping $\lambda\mapsto x_\lambda$ from  $(0,1]$ into $S$  such that \[f(x_\lambda)=p+\frac{1}{\lambda \alpha_p}(x_\lambda-p).\]
	\end{itemize}
Note that a closed bounded star-shaped set in a locally convex space is convex if and only if $\alpha=1$, and we extend a Brouwer's type fixed-point theorem on compact star-shaped sets in Banach spaces in a more concise manner to locally convex spaces, thereby this is a simplification and an improvement of the Tychonoff fixed-point theorem to compact star-shaped sets.
\end{abstract}

\section{Introduction}
{It is well known that Brouwer's fixed point theorem \cite{Br1}  states that every continuous self mapping on a nonempty compact convex set in $\mathbb R^n$ has a fixed point, and it has promoted the development of many mathematical fields (See, for instance, \cite{Ca1}). In 1930, Schauder \cite{Sc1} elegantly extended Brouwer's theorem to compact convex subsets of an arbitrary Banach space. After that, Tychonoff \cite{Ty1} further extended it to compact convex subsets of locally convex spaces. In 1955, Klee \cite{K1} presented a partial converse of Tychonoff's theorem, namely that any convex subset of a locally convex space with the fixed point property must be compact. Thus, compactness becomes an essential assumption for the discussion of the fixed point property. It prompted many scholars to consider non-convex compact sets. See, for example, \cite{Ja1}, \cite{Pe1} and \cite{Ra1}.

As an important class of non-convex sets, star-shaped sets naturally attract attention. Many scholars have explored the existence of fixed points of non-expansion mappings on them.(See, for example, \cite{Do1}, \cite{Ha1}, \cite{Ha2} and Section 19.5 in \cite{Ha3}). Does there exist a fixed point of any continuous self-mapping on a compact star-shaped set? The answer is no. Bing \cite{Bi1} provided a continuous self-mapping on Knill's cone \cite{Kn1} that lacks a fixed point. 
It has prompted increased attention on some stronger class of star-shaped sets. 

\cite{Ch1} proposed convexity index $\alpha$ of star-shaped subsets in a Banach space, such that a closed bounded star-shaped subset is convex if and only if $\alpha=1$, and further, it was pointed out that the collection of all compact star-shaped subsets with positive convexity index is dense within the collection of all compact star-shaped subsets in the sense of Hausdorff distance. By using tools such as paving, metric projection, and renorming in Banach spaces, \cite{Ch1} presented the following Brouwer-type fixed-point theorem on compact star-shaped sets in Banach spaces.
\begin{theorem}[\cite{Ch1}]
	Suppose that $X$ is a Banach space. If $S$ is a compact  star-shaped subset  with respect to $p\in S$  with convexity index $\alpha_p>0$, then every continuous self-mapping $f:S\rightarrow S$ has one of the following two properties:\vspace{-2pt}
	\begin{itemize}
		\setlength{\itemsep}{-12pt}
		\item[(a)] The point $p$ is a fixed point of $f$, i.e.,  $f(p)=p$;\\
		\item[(b)] $f$ has uncountably many different eigenvalues and eigenvectors; that is, there exists an injective mapping $\lambda\rightarrow x_\lambda$ from  $(0,1]$ into $S$  such that \[f(x_\lambda)=p+\frac{1}{\lambda \alpha_p}(x_\lambda-p).\]
	\end{itemize}
\end{theorem}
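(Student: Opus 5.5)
The plan is to reduce the statement to Schauder's theorem on the closed convex hull of $S$, using a rescaling toward $p$ that the convexity index makes possible. I take the convexity index from \cite{Ch1} to be
\[
\alpha_p=\sup\{t\in[0,1]:\ p+t(\overline{\mathrm{co}}\,S-p)\subset S\}.
\]
Under this definition a closed bounded star-shaped set is convex exactly when $\alpha_p=1$. If the definition in \cite{Ch1} is phrased differently, the first step below is to show it implies this inclusion.

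\textbf{Step 1: the inclusion holds for every $t\in[0,\alpha_p]$.} Put $C=\overline{\mathrm{co}}\,S$. Because $S$ is compact in the Banach space $X$, Mazur's theorem makes $C$ a nonempty compact convex set. The set $T=\{t\in[0,1]: p+t(C-p)\subset S\}$ is closed: if $t_n\to t$ with $t_n\in T$, then for each $y\in C$ we have $p+t_n(y-p)\to p+t(y-p)$, and $S$ is closed. Hence $\alpha_p\in T$. Star-shapedness with respect to $p$ then gives $[0,\alpha_p]\subset T$. Concretely, for every $\lambda\in(0,1]$ and every $y\in C$,
\[
p+\lambda\alpha_p(y-p)\in S .
\]
I expect this step to be the only real obstacle. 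Everything hinges on the convexity index yielding exactly this rescaled-hull inclusion with the supremum attained. If \cite{Ch1} defines $\alpha_p$ by a local condition, for example through segments or pairs of points, extra work is needed to pass to all of $C$.

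\textbf{Step 2: Schauder for each $\lambda$.} Fix $\lambda\in(0,1]$ and define $g_\lambda:C\to C$ by
\[
g_\lambda(y)=f\bigl(p+\lambda\alpha_p(y-p)\bigr).
\]
This is well defined by Step 1, continuous, and takes values in $S\subset C$. Schauder's fixed point theorem gives $y_\lambda\in C$ with $g_\lambda(y_\lambda)=y_\lambda$. Set $x_\lambda=p+\lambda\alpha_p(y_\lambda-p)$, which lies in $S$. Then $y_\lambda=p+\frac{1}{\lambda\alpha_p}(x_\lambda-p)$, and the fixed-point equation reads
\[
f(x_\lambda)=p+\frac{1}{\lambda\alpha_p}(x_\lambda-p),
\]
as required. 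Here $\alpha_p>0$ is what allows division by $\lambda\alpha_p$.

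\textbf{Step 3: dichotomy and injectivity.} Suppose (a) fails, so $f(p)\neq p$. If $x_\lambda=x_\mu=x$ for some $\lambda\neq\mu$, then
\[
\frac{1}{\lambda\alpha_p}(x-p)=\frac{1}{\mu\alpha_p}(x-p),
\]
which forces $x=p$. Then $f(p)=p$, a contradiction. So $\lambda\mapsto x_\lambda$ is injective on $(0,1]$, which is (b). The same argument shows $x_\lambda\neq p$, so each $x_\lambda$ is a genuine eigenvector of $f$ at the centre $p$ with eigenvalue $1/(\lambda\alpha_p)$. These eigenvalues are pairwise distinct, hence uncountably many.
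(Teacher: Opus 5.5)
Your proposal is correct and is essentially the paper's argument (the proof of Theorem~\ref{Theo3.1}, with Schauder in place of Tychonoff). The paper applies the fixed point theorem to $z\mapsto\lambda\alpha_p(f(z+p)-p)$ on $\lambda\alpha_p(C-p)$, which is your $g_\lambda$ conjugated by $y\mapsto\lambda\alpha_p(y-p)$, and derives injectivity from $f(p)\neq p$ exactly as you do; your Step 1 (closedness of $T$ plus star-shapedness) usefully makes explicit the attainment of the supremum and the passage from $\alpha_p$ to $\lambda\alpha_p$, which Proposition~\ref{Prop2.2} and the paper's proof leave implicit.
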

Note that convexity index of star-shaped sets is an algebraic notion independent of norms. It led us to expand the consideration scope of the above to locally convex spaces. In this paper, we achieve this aim concisely and give the desired result.}

\section{Convexity Index of Star-Shaped Sets}
{
In this section, we introduce convexity index of star-shaped subsets in locally convex spaces and briefly discuss their properties. In this paper, all topological spaces are assumed to be Hausdorff.

Recall that a subset $S$ of a linear topological space $X$ is said to be star-shaped with respect to  $p\in S$ if for all $x\in S$, the segment $[p,x]\equiv\{\lambda x+(1-\lambda)p: \lambda\in[0,1]\}\subset S$. We use $\mathfrak C(S)$ to denote the set of all points $p\in S$ such that $S$ is star-shaped with respect to  $p$. If, in addition, $S$ is closed in $X$, then we can define the following extended real-valued function $\rho_{S,p}$ by
\begin{equation*}
\rho_{S,p}(x)=\inf\{r>0:x-p\in r(S-p)\},\qquad x\in X
\end{equation*}	
In this case, we say that $\rho_{S,p}$ is the function generated by $S-p.$ It is easy to check that $\rho_{S,\rho}$ is an extended real-valued nonnegative positively homogeneous function.
\begin{definition}[Convexity index]
	Let $S$ be a nonempty star-shaped set of a linear topological space $X$, and $C$ be the closed convex hull of $S$, i.e., $C=\overline{\text{co}}(S)$. Then
	\begin{equation*}
	\alpha_p(S)=\inf_{x\in C}\frac{\rho_{C,p}(x)}{\rho_{S,p}(x)}\notag
	\end{equation*}	
	is called the\textit{ convexity index of $S$ with respect to $p\in \mathfrak{C}(S)$}, and
	$$\alpha(S)=\sup_{p\in\mathfrak{C}(S)}\alpha_p$$
	is called the \textit{convexity index of $S$}.
\end{definition}
\noindent We shall often denote $\alpha_p(S)$ and $\alpha(S)$ simply by $\alpha_p$ and $\alpha$, respectively, whenever no confusion is possible.

Convexity index of star-shaped sets have the following geometric representation.
\begin{proposition}\label{Prop2.2}
	Suppose that $S$ is a star-shaped subset of a locally convex space and $p\in\mathfrak{C}(S)$. Then
	$\alpha_p=\sup\{r\in[0,1]:r(\overline{\mathrm{co}}(S)-p)\subset S-p\}.$
\end{proposition}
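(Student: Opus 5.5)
The plan is to prove the two inequalities between $\alpha_p$ and
$$\beta_p:=\sup\{r\in[0,1]:r(\overline{\mathrm{co}}(S)-p)\subset S-p\}$$
separately, working only with the definitions of $\rho_{S,p}$ and $\rho_{C,p}$. After translating we may assume $p=0$, so $S$ is star-shaped with respect to $0$ and $C=\overline{\mathrm{co}}(S)$. As in the paper, I assume $S$ is closed, since $\rho_{S,p}$ is only defined in that case.

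I will record three elementary facts first.
\begin{enumerate}
\item If $x\in tS$ and $t'>t$, then $x\in t'S$. Indeed $x/t'=(t/t')(x/t)\in S$ because $S$ is star-shaped about $0$. Hence $x\in tS$ for every $t>\rho_{S,0}(x)$, and the same holds for $C$.
\item Since $C$ is closed, $x\in sC$ whenever $s=\rho_{C,0}(x)>0$.
\item Since $S\subset C$, we have $\rho_{C,0}\le\rho_{S,0}$, and $\rho_{C,0}(x)\le 1$ for $x\in C$.
\end{enumerate}
The quotient in the definition of $\alpha_p$ therefore lies in $[0,1]$. Points where $\rho_{C,0}(x)=0$ need a convention (such as $0/0=1$), and I will check that these points are harmless in both directions.

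For $\beta_p\le\alpha_p$, take $r\in(0,1]$ with $rC\subset S$; the case $r=0$ is trivial. Let $x\in C$ and put $s=\rho_{C,0}(x)$.
\begin{itemize}
\item If $s>0$, then $x/s\in C$ by fact 2, so $rx/s\in S$. This gives $\rho_{S,0}(x)\le s/r$, i.e. $\rho_{C,0}(x)/\rho_{S,0}(x)\ge r$.
\item If $s=0$, then $x\in tC$ for every $t>0$, so $rx/t\in S$ for all $t>0$. Hence $\rho_{S,0}(x)=0$, and under the convention the quotient does not lower the infimum.
\end{itemize}
Taking the infimum over $x\in C$ gives $\alpha_p\ge r$, and then the supremum over such $r$ gives $\alpha_p\ge\beta_p$.

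For $\alpha_p\le\beta_p$, we may assume $\alpha_p>0$; fix $0<r<\alpha_p$. For $x\in C$ the definition of $\alpha_p$ gives
$$\rho_{S,0}(x)\le \rho_{C,0}(x)/\alpha_p\le 1/\alpha_p<1/r.$$
In particular $\rho_{S,0}(x)$ is finite. By fact 1 there is $t<1/r$ with $x\in tS$, and hence $x\in(1/r)S$, i.e. $rx\in S$. Thus $rC\subset S$, so $r\le\beta_p$. Letting $r\uparrow\alpha_p$ yields $\alpha_p\le\beta_p$.

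The only delicate points are bookkeeping: the degenerate values $\rho=0$ or $\rho=\infty$, and whether infima are attained. These are handled by fact 1 (strict inequalities plus star-shapedness let us pass to a slightly larger scaling) and by closedness of $C$. The supremum in $\beta_p$ need not be attained, but the argument never requires that.
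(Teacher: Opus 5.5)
Your proof is correct and follows the paper's argument: $r(C-p)\subset S-p$ forces $\rho_{C,p}/\rho_{S,p}\ge r$ on $C$, and conversely $\alpha_p\rho_{S,p}\le\rho_{C,p}$ on $C$ yields an inclusion; you also make explicit the degenerate cases $\rho=0$, $\rho=\infty$ and the $0/0$ convention, which the paper leaves implicit. The one difference is that the paper asserts the sharper inclusion $\alpha_p(C-p)\subset S-p$, i.e.\ the supremum is attained; this needs $S$ closed and is what Theorem 3.1 later uses. You prove the inclusion only for $r<\alpha_p$, which suffices for the stated equality, and when $S$ is closed you recover the paper's inclusion by letting $r\uparrow\alpha_p$.
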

\begin{proof}
	Set $C=\overline{\mathrm{co}}(S)$.
	Note that
	$ \alpha_p\rho_{S,p}(x)\leq\rho_{C,p}(x)$ for each $x\in C$.
	It follows that $\alpha_p(C-p)\subset S-p$. Conversely, $r\in[0,1]$ with $r(C-p)\subset S-p$ implies that
	$$ \frac{\rho_{C,p}(x)}{\rho_{S,p}(x)}\geq r,\qquad \forall x\in C$$
	Thus, $\alpha_p\geq r$.
\end{proof}

Let $X$ be a locally convex space. Denote the collection of all nonempty bounded star-shaped subsets of $X$ by $\mathfrak{S}(X)$. Assume that $\mathcal{U}=\{U_{\iota}\}_{\iota\in I}$ is a local base of convex $0$-neighborhoods. For every $\iota\in I$, denote the Minkowski function with respect to $U_{\iota}$ as $\rho_{\iota}$. Note that $\rho_{\iota}$ is a semi-norm on $X$. Fix $\iota\in I$, and define $d_{\iota}:\mathfrak{S}(X)\times\mathfrak{S}(X)\rightarrow [0,\infty)$ by
$$ d_{\iota}(A,B)=\max\left\{\sup_{a\in A}\inf_{b\in B}\rho_{\iota}(a-b),\sup_{b\in B}\inf_{a\in A}\rho_{\iota}(a-b)\right\},\qquad A,B\in \mathfrak{S}(X).$$
It is easy to see that $d_{\iota}$ is a pseudometric on $\mathfrak{S}(X)$. Denote the topology on $\mathfrak{S}(X)$ induced by $\{d_{\iota}\}_{\iota\in I}$ as $\tau_H$. We show below that, in $\tau_H$-topological sense, bounded star-shaped subsets with positive convexity index are very numerous.
\begin{proposition}\label{Prop2.3}
	The collection of all nonempty bounded star-shaped subsets with positive convexity index is dense in $(\mathfrak{S}(X),\tau_H)$.
\end{proposition}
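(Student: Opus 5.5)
Given $S\in\mathfrak{S}(X)$ with $p\in\mathfrak{C}(S)$, a $\tau_H$-neighborhood of $S$ is determined by finitely many indices $\iota_1,\dots,\iota_n$ and some $\varepsilon>0$. Since $\mathcal U$ is a local base, we may take $U\in\mathcal U$ with $U\subset U_{\iota_1}\cap\dots\cap U_{\iota_n}$; then $\rho_{\iota_k}\le\rho_U$ for every $k$. It therefore suffices to find a bounded star-shaped $T$ with $\alpha(T)>0$ and $d_U(S,T)<\varepsilon$. The plan is to thicken $S$ slightly with a small scaled copy of its closed convex hull, placed at the star center $p$.

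Set $C=\overline{\mathrm{co}}(S)$. Because $S$ is bounded, $C$ is bounded as well: in a locally convex space the closed convex hull of a bounded set is bounded, since each convex $0$-neighborhood absorbs $S$ and hence absorbs $\mathrm{co}(S)$ and its closure. So there is $M>0$ with $C-p\subset MU$. Fix $r\in(0,1)$ with $rM<\varepsilon$, and define
$$T=S\cup\bigl(p+r(C-p)\bigr).$$
We then check the required properties in order.
\begin{enumerate}
\item[(i)] $T$ is bounded, being a union of two bounded sets.
\item[(ii)] $T$ is star-shaped with respect to $p$. Both $S$ and the convex set $p+r(C-p)$ are star-shaped about $p$, and $p$ lies in both.
\item[(iii)] $\overline{\mathrm{co}}(T)=C$. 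Indeed $S\subset T\subset C$, since $p+r(C-p)=rC+(1-r)p\subset C$ by convexity of $C$ and $p\in C$.
\item[(iv)] $r(\overline{\mathrm{co}}(T)-p)=r(C-p)\subset T-p$, so Proposition~\ref{Prop2.2} gives $\alpha_p(T)\ge r>0$, and hence $\alpha(T)\ge r>0$.
\item[(v)] $d_U(S,T)<\varepsilon$. Every point of $S$ lies in $T$. For a point $y=p+r(c-p)$ with $c\in C$, the point $p\in S$ satisfies $\rho_U(y-p)=r\rho_U(c-p)\le rM<\varepsilon$, since $c-p\in MU$ gives $\rho_U(c-p)\le M$. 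So both one-sided suprema in the definition of $d_U$ are below $\varepsilon$.
\end{enumerate}

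One technical point concerns closedness. Proposition~\ref{Prop2.2} is stated for $p\in\mathfrak C(S)$, but the definition of $\rho_{S,p}$ presumes a closed set. If closedness is needed, we replace $S$ by $\overline S$ and $T$ by $\overline T$. This is harmless because $d_U(S,\overline S)=0$, the closure of a star-shaped set is again star-shaped about the same center, and $\overline T=\overline S\cup(p+r(C-p))$ still contains $p+r(C-p)$.

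The main obstacle I expect is this bookkeeping: verifying that Proposition~\ref{Prop2.2} applies to $T$ (or to $\overline T$), and that the bound $\rho_U\le M$ on $C-p$ genuinely follows from boundedness in the locally convex setting. The geometric construction itself is straightforward.
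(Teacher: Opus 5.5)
Your proposal is correct and is essentially the paper's proof. The paper also sets $S'=S\cup(p+t(C-p))$, gets $\alpha_p(S')>0$ from Proposition~\ref{Prop2.2}, and bounds $d_\iota(S,S')$ by $\sup_{c\in t(C-p)}\rho_\iota(c)$ using $p\in S$. Your additions only tighten details the paper leaves implicit: reducing to a single $U\in\mathcal U$, noting $\overline{\mathrm{co}}(T)=C$ so that $\alpha_p(T)\ge r$ directly (instead of the paper's looser inclusion into $(1+t)(C-p)$), and justifying that $C$ is bounded.
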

\begin{proof}
	Let $S\in\mathfrak{S}(X)$ and set $C=\overline{\mathrm{co}}(S)$. Pick $p\in\mathfrak{C}(S)$ and fix a finite subset $I_0 \subset I$. Since $C$ is bounded, for any finite set $\{\varepsilon_{\iota}\}_{\iota\in I_0}\subset(0,\infty)$, there exists $t>0$ such that $2t(C-p)\subset \bigcap_{\iota\in I_0}\varepsilon_{\iota}U_{\iota}$. Put $S'=S\cup(t(C-p)+p)$. Clearly, $S'\in\mathfrak{S}(X)$ and $\overline{\mathrm{co}}(S')-p\subset (1+t)\cdot(C-p)$, and further $\alpha(S')\geq\alpha_p(S')>0$ by Proposition \ref{Prop2.2}.
	Note that
	\begin{align*}
	d_{\iota}(S,S')&=\sup_{s'\in {p}\cup (S'\backslash S)}\inf_{s\in S}\rho_{\iota}(s'-s)\leq \sup_{s'\in {p}\cup (S'\backslash S)}\rho_{\iota}(s'-p)\\
	&\leq\sup_{s'\in t(C-p)+p}\rho_{\iota}(s'-p)\leq\sup_{c\in t(C-p)}\rho_{\iota}(c)<\varepsilon_{\iota},
	\end{align*}
	for $\iota\in I_0$. The proof is complete.
\end{proof}
}
\section{Main Result}
{The following theorem is the main result  of this paper.
\begin{theorem}\label{Theo3.1}
	Suppose that $S$ is a compact star-shaped subset of a locally convex space $X$ with respect to $p\in S$ with convexity index $\alpha_p>0$, then every continuous self-mapping $f:S\rightarrow S$ has one of the following two properties:\vspace{-2pt}
	\begin{itemize}
		\setlength{\itemsep}{-12pt}
		\item[(a)] The point $p$ is a fixed point of $f$, i.e.,  $f(p)=p$;\\
		\item[(b)] $f$ has uncountably many different eigenvalues and eigenvectors; that is, there exists an injective mapping $\lambda\mapsto x_\lambda$ from  $(0,1]$ into $S$  such that \[f(x_\lambda)=p+\frac{1}{\lambda \alpha_p}(x_\lambda-p).\]
	\end{itemize}
\end{theorem}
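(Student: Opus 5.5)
Translating by $-p$, we may assume $p=0$. Then $S$ is star-shaped with respect to $0$, and we write $\alpha=\alpha_0>0$ and $C=\overline{\mathrm{co}}(S)$. The plan is to produce, for each $\lambda\in(0,1]$, a compact convex subset of $S$ that is invariant under the rescaled map $\lambda\alpha f$. Tychonoff's fixed point theorem then gives a point $x_\lambda$ with $\lambda\alpha f(x_\lambda)=x_\lambda$, which is exactly the eigen-equation in (b) once we translate back by $p$. Injectivity will fail only when (a) holds.

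\textbf{Step 1 (compactness of the hull).} By Proposition \ref{Prop2.2} the supremum is attained, since $\alpha C\subset S$: for $r_n\uparrow\alpha$ and $x\in C$ we have $r_nx\in S$, and $S$ is closed. We cannot assume that $C$ itself is compact, because $X$ need not be quasi-complete; this is the one point needing care. Instead, $\alpha C$ is a closed subset of the compact set $S$, because $C$ is closed and $x\mapsto\alpha x$ is a homeomorphism. Hence $\alpha C$ is compact, and so $C$ is compact and convex. Consequently, for every $\lambda\in(0,1]$, the set $K_\lambda=\lambda\alpha C$ is a nonempty compact convex set. It is contained in $S$, since $\lambda\alpha C\subset\alpha C\subset S$ by star-shapedness with respect to $0$.

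\textbf{Step 2 (fixed points of the rescaled maps).} Define $T_\lambda:K_\lambda\to X$ by $T_\lambda(x)=\lambda\alpha f(x)$, which makes sense since $K_\lambda\subset S$. Because $f(x)\in S\subset C$, we get $T_\lambda(x)\in\lambda\alpha C=K_\lambda$. So $T_\lambda$ is a continuous self-map of the compact convex set $K_\lambda$ in the locally convex space $X$. Tychonoff's theorem yields $x_\lambda\in K_\lambda\subset S$ with $\lambda\alpha f(x_\lambda)=x_\lambda$, that is,
\[
f(x_\lambda)=\frac{1}{\lambda\alpha}\,x_\lambda .
\]
Undoing the translation gives $f(x_\lambda)=p+\frac{1}{\lambda\alpha_p}(x_\lambda-p)$.

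\textbf{Step 3 (dichotomy).} Choose one such $x_\lambda$ for each $\lambda\in(0,1]$. Suppose $x_\lambda=x_\mu=x$ for some $\lambda\neq\mu$. Then $\frac{1}{\lambda\alpha}x=\frac{1}{\mu\alpha}x$, which forces $x=0$, and then $f(0)=0$; in the original coordinates this is $f(p)=p$, i.e. (a). Therefore, if (a) fails, the map $\lambda\mapsto x_\lambda$ is injective, and moreover each $x_\lambda\neq p$, which gives (b).

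The only delicate point is Step 1: we need compactness of the convex set on which Tychonoff's theorem is applied without assuming completeness of $X$. This is handled by the observation that $\alpha C$ sits as a closed subset inside the compact set $S$.
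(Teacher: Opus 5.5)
Your proposal is correct and follows the paper's proof. You apply Tychonoff's theorem to the rescaled map $x\mapsto\lambda\alpha_p(f(x)-p)+p$ on $p+\lambda\alpha_p(\overline{\mathrm{co}}(S)-p)\subset S$, and you get injectivity because $(\lambda-\lambda')(x_\lambda-p)=0$ forces $f(p)=p$. Your Step 1 also proves that $\overline{\mathrm{co}}(S)$ is compact, since $p+\alpha_p(\overline{\mathrm{co}}(S)-p)$ is a closed subset of the compact set $S$; the paper uses this compactness without justification when it invokes Tychonoff's theorem.
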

\begin{proof}
	Let $C=\overline{\mathrm{co}}(S)$. By Proposition \ref{Prop2.2}, $\alpha_p(C-p)\subset S-p\subset C-p$. Given $\lambda\in(0,1]$, consider the continuous self-mapping $g_{\lambda}:\lambda\alpha_p(C-p)\rightarrow \lambda\alpha_p(C-p)$ defined by
	$$ g(x)=\lambda\alpha_p(f(x+p)-p).$$
	By Tychonoff's fixed point theorem, there exists an $z_{\lambda}\in \lambda\alpha_p(C-p)$ with $g(z_{\lambda})=z_{\lambda}$. Let $x_{\lambda}=z_{\lambda}+p$. Note that $\lambda\alpha_p(f(x_{\lambda})-p)=x_{\lambda}-p$, and further $$f(x_{\lambda})=p+\frac{1}{\lambda\alpha_p}(x_{\lambda}-p).$$
	
	If $f(p)\neq p$, we claim that $x_{\lambda}=x_{\lambda'}$ implies $\lambda=\lambda'$.
	Otherwise, assume that $x_{\lambda}=x_{\lambda'}$ where $0<\lambda<\lambda'\leq 1$. Note that
	$$ x_{\lambda}=f(x_{\lambda})=p+\frac{1}{\lambda\alpha_p}(x_{\lambda}-p)=p+\frac{1}{\lambda'\alpha_p}(x_{\lambda'}-p)=f(x_{\lambda'})=x_{\lambda'}.$$
	Then $(\lambda-\lambda')(x_{\lambda}-p)=0$, and further $x_{\lambda}=p$. Thus, $f(p)=p$, which contradicts the hypothesis. From this, we have demonstrated that the mapping $\lambda\mapsto x_{\lambda}$ is injective.
\end{proof}
The following result is a a consequence of Theorem \ref{Theo3.1}.

\begin{theorem}\label{Theo1.1}
	Suppose that $K$ is a nonempty compact convex set of a locally convex space $X$, and that $f:K\rightarrow K$ is a continuous mapping. If $f$ is not the identity, then there is a point $p\in K$ such that  $f$ has uncountably many different eigenvalues and eigenvectors; that is, there exists an injective mapping $\lambda\mapsto x_\lambda$ from  $(0,1]$ into $S$  such that \[f(x_\lambda)=p+\frac{1}{\lambda}(x_\lambda-p).\]
\end{theorem}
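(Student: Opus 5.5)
We deduce Theorem \ref{Theo1.1} from Theorem \ref{Theo3.1} with $S=K$. A convex set is star-shaped with respect to each of its points, so $\mathfrak C(K)=K$. Since $K$ is compact and convex in a locally convex (Hausdorff) space, it is closed, and hence $\overline{\mathrm{co}}(K)=K$. The first step is therefore to compute the convexity index: for every $p\in K$, the inclusion $1\cdot(\overline{\mathrm{co}}(K)-p)=K-p\subset K-p$ holds trivially. Proposition \ref{Prop2.2} then gives $\alpha_p(K)=1$ for every $p\in K$. (Directly, $\rho_{C,p}=\rho_{S,p}$ because $C=S$, so the ratio is $1$ wherever it is defined.) In particular $\alpha_p=1>0$, so the hypotheses of Theorem \ref{Theo3.1} hold at every base point $p\in K$.

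The second step is to choose the base point. Since $f$ is not the identity, there is some $p\in K$ with $f(p)\neq p$. We fix this $p$ and apply Theorem \ref{Theo3.1} to $S=K$, this $p$, and $f$. Alternative (a) fails by the choice of $p$, so alternative (b) must hold. That is, there is an injective map $\lambda\mapsto x_\lambda$ from $(0,1]$ into $K$ with
\[
f(x_\lambda)=p+\frac{1}{\lambda\alpha_p}(x_\lambda-p)=p+\frac{1}{\lambda}(x_\lambda-p),
\]
which is exactly the conclusion. (The statement writes ``into $S$''; here $S=K$.)

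There is essentially no obstacle. The only points needing care are that $K$ is closed, which uses the Hausdorff convention of the paper, so that the closed convex hull is $K$ itself, and that the fixed points produced in the proof of Theorem \ref{Theo3.1} exist. The latter is handled there by Tychonoff's theorem applied to the compact convex set $\lambda(K-p)$. The injectivity uses $f(p)\neq p$, which is exactly the property for which $p$ was chosen.
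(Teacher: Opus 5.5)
Your proposal is correct and is the deduction the paper intends; the paper gives no explicit proof and simply calls Theorem \ref{Theo1.1} a consequence of Theorem \ref{Theo3.1}. Your argument is that $K$ is closed, so $\overline{\mathrm{co}}(K)=K$ and $\alpha_p(K)=1$ for every $p\in K$ by Proposition \ref{Prop2.2}; choosing $p$ with $f(p)\neq p$ then rules out alternative (a), leaving (b) with $\alpha_p=1$.
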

}

\bibliographystyle{plain}

\end{document}